\newtheorem{theorem}{Theorem}
\newtheorem{lemma}[theorem]{Lemma}
\newtheorem{corollary}[theorem]{Corollary}
\newcommand{\cA}{{\mathcal A}}
\newcommand{\cB}{{\mathcal B}}
\newcommand{\cF}{{\mathcal F}}
\newcommand{\cL}{{\mathcal L}}
\newcommand{\cO}{{\mathcal O}}
\newcommand{\cP}{{\mathcal P}}
\newcommand{\cQ}{{\mathcal Q}}
\newcommand{\cX}{{\mathcal X}}
\newcommand{\cJ}{{\mathcal J}}
\newcommand{\rF}{{\mathbb F}}
\newcommand{\ud}{{\,\mathrm d}}
\begin{document}
\title{On the distribution of sparse sequences in prime fields and applications}
\author{
{Victor C. Garc\'\i a}\\
\normalsize{Departamento de Ciencias B\'asicas}\\
\normalsize{Universidad
Aut\'onoma Metropolitana--Azcapotzalco}\\
\normalsize{C.P. 02200, M\'exico D.F., M\'exico}
\\
\normalsize{\tt vc.garci@gmail.com}}
\date{}

\pagenumbering{arabic}

\maketitle
\begin{abstract}
    In the present paper we investigate distributional properties of sparse sequences
    modulo almost all prime numbers. We obtain new results for a wide class of sparse sequences
    which in particular  find applications on additive problems and the discrete Littlewood
    problem related to lower bound estimates of the $L_1$-norm of trigonometric sums.
\end{abstract}

\paragraph*{2000 Mathematics Subject Classification:} 11B39, 11B50, 11L07.

\section{Introduction}
   Throughout the paper $\{x_n\}$ is an increasing  sequence of positive integers.
   The study of distributional properties of  the sequence
   $$
      x_n \pmod p; \quad n=1,2,\ldots \,,
   $$
   and additive problems connected with such sequences are classical questions in number theory with a variety of results 
   in the literature. When $\{x_n\}$ grows rapidly the problem becomes harder for individual moduli, but it is possible to 
   obtain strong results modulo most of the primes $p.$ We mention the work of Banks, Conflitti, Friedlander and 
   Shparlinski~\cite{BanksConflittiFriedShpar}, where a series of results on distribution of Mersenne numbers 
   $M_q = 2^q-1$ in residue classes have been obtained. This question has also been considered by Bourgain in~\cite{Bourgain1}. 
   General results on distribution of sequences of type $2^{x_n},$ (and generally of the form $\lambda^{x_n}$),
   modulo most of the primes have 
   been obtained by Garaev and Shparlinski~\cite{GaraevShparlinski}, and by Garaev~\cite{Garaev1}. For instance,
    Garaev~\cite{Garaev1} has obtained a non-trivial bound  for the exponential sum
   $$
      \max_{(a,p)=1}\left|\sum_{n\le T}e^{2 \pi i \frac{a}{p}{\lambda}^{x_n}}\right|,
   $$
     for $\pi(N)(1+o(1))$ primes $p\le N$ and $T=N(\log N)^{2+\varepsilon},$ where
    $\{x_n\}$ is any strictly increasing sequence of positive integers satisfying $x_n\le n^{15/14 + o(1)}.$
    Banks, Garaev, Luca and Shparlinski~\cite{BanksGaraevLucaShparlinski} obtained uniform distributional properties
    of the sequences
   $$
     f_g(n)=\frac{g^{n-1}-1}{n}, \qquad h_g(n)=\frac{g^{n-1}-1}{P(n)},
   $$
     where $g$ and $n$ are positive integers, $n$ is composite and $P(n)$ is the largest prime factor of $n.$

\bigskip
  
   Now consider a simpler sequence 
   $$
     2^n \pmod p ; \qquad n=1, 2,\ldots \, .
   $$
     From a result of Erd\"os and  Murty~\cite{ErdosMurty} it is well-known that $2$ has the multiplicative
     order $t_p\ge N^{1/2 + o(1)}$
     for $\pi(N)(1+o(1))$ primes $p\le N$. Combining this  with a
      result of Glibichuk~\cite{Glibichuk} it follows  that for almost all primes $p$ every
     residue class modulo $p$ can be represented in the form
   $$
     2^{n_1} + \ldots +2^{n_8} \pmod p,
   $$
     for certain positive integers $n_1,\ldots, n_8.$ Garc\'\i a, Luca and Mej\'\i a~\cite{GarciaLucaMejia}
     have applied similar arguments to obtain analogous results for
     the sequence of Fibonacci numbers
   $$
     F_n \pmod p; \qquad n=1,2,\ldots \,,
   $$
     where
  $$
     F_{n+2}=F_{n+1}+F_n, \qquad n\ge 1,
  $$
     with $F_1=F_2=1.$ They proved that for almost all primes $p,$ every residue class modulo $p$
     is a  sum of 32 Fibonacci numbers.

     In the present paper using a different approach we obtain new results on  additive properties
     for general sparse sequences for almost all the prime moduli.  In particular
     we prove that for $\pi(N)(1+o(1))$ primes $p \le N$ every residue class is a sum of 16
     Fibonacci numbers $F_n,$ with $n \le N^{1/2 + o(1)},$ improving upon the mentioned result of
     Garc\'\i a, Luca and Mej\'\i a. Moreover, we
     establish that for any $\varepsilon >0$  there is an integer $s\le 100/\varepsilon$ such that for
     $\pi(N)(1+o(1))$ primes, $p\le N,$ every residue class can be written
     as
   $$
     F_1 +\ldots +F_{s}\pmod p,
   $$
     with  $1\le n\le N^{\varepsilon}.$  We note that
     the value $s$ has the optimal order $s=\cO(1/\varepsilon).$

\bigskip
     
    From the work of Karatsuba~\cite{Karatsuba} it is known the connection between additive problems and the Littlewood 
    problem on lower bound estimates for the $L_1$-norm of
    exponential sums.  Namely, for any coefficents $\gamma_n,$ $|\gamma_n|=1,$ and any strictly increasing sequence of
    integers $\{f(n)\},$ Karatsuba established 
  \begin{equation} \label{ineq:KaratsubaL1}
     \int \limits_0^1\left|  \sum_{n=1}^{N}\gamma_n e^{2\pi i \alpha f(n)}\right| \ud \alpha \ge
      \left(N^3/J\right)^{1/2},
  \end{equation}
    where $J$ denotes the number of solutions of the diophantine equation
  $$
    f(n)+f(m)=f(k)+f(\ell); \qquad 1 \le n,m,k,\ell \le N.
  $$  
    Solving the Littlewood conjecture, Konyagin~\cite{Konyagin}, and  McGehee, Pigno and
    Smith~\cite{McGePigSmith} proved that
 \begin{equation}\label{eq:LittlewoodConj}
   \int \limits_{0}^{1}\left| \sum_{n=1}^{N}e^{2\pi i \alpha f(n)}\right| \ud \alpha \gg \log N,
 \end{equation}
    where $f(n)$ is an integer valued function. This bound reflects the best possible lower bound
     in general settings, as it shown by the example $f(n)=n$.
   However, due to the connection with certain additive problems, for a very wide class of integer valued sequences $f(n),$
   estimate~\eqref{eq:LittlewoodConj}
   has been improved, see, for example, Garaev~\cite{Garaev}, Karatsuba~\cite{Karatsuba}
   and Konyagin~\cite{Konyagin1}.

   Green and Konyagin~\cite{GreenKony} established a variant of the Littlewood problem in prime fields $\rF_p.$
   One of their results claims that if $\cA$ is a subset of $\rF_p,$ with $|\cA|=(p-1)/2,$ then
 $$
   \frac{1}{p}\sum_{x=0}^{p-1}\left|\sum_{a \in \cA}e^{2 \pi i x \frac{x}{p}a}\right|
   \gg (\log p)^{1/3 - \varepsilon}.
 $$
 
   One can use a version of Karatsuba's inequality~\eqref{ineq:KaratsubaL1}
    to get a variety of result for specific sequences. For instance, we employ a recent result of Bourgain 
    and Garaev~\cite{BourgainGaraev} on additive energy of 
   the set $g^x \pmod p; \, 1\le x\le N,$ and show that for any
   $N < p^{1/2}$ we have the bound
  $$
   \frac{1}{p}\sum_{x=0}^{p-1}\left|\sum_{n \le N}e^{2 \pi i  \frac{x}{p}g^n}\right| \gg N^{1/48 +o(1)}.
  $$
    For the sequence $\{F_n\}$  of Fibonacci numbers we shall prove the following result: given any positive real
    $\gamma < 1/3$ there are positive constants $c_1=c_1(\gamma), c_2=c_2(\gamma)$ such that for 
    $\pi(N)(1+o(1))$ primes  $p\le N$ the following estimate holds
  $$
     c_1 N^{\gamma/2}\le \frac{1}{p}\sum_{x=0}^{p-1}\left|\sum_{n\le N^{\gamma}}
     e^{2 \pi i \frac{x}{p}F_n}\right|
      \le c_2 N^{\gamma/2}.
  $$

     {\bf Acknowledgement.} The author is grateful to  M.~Z.~Garaev for sharing his thoughts which has led to the  present improvement of the result of~\cite{GarciaLucaMejia}.

 \section{Formulation of results}

   Throughout the paper $N$ and $M$ always denote positive large parameters.
   Let $\cX$ be any subset of $\{1,\ldots, 10^M\}.$
   The first result of our present paper relies on ideas of arithmetic combinatorics.

   \begin{theorem}\label{thm:average}
      Let $\cJ(N)$ be the number of solutions
      of
  \begin{equation}\label{eq:sim}
    x\equiv y \pmod p; \qquad x,y \in \cX, \quad p\le N.
  \end{equation}
    The following asymptotic formula holds
   \begin{equation}\label{asymp:average}
        \cJ(N)=\pi(N)|\cX| + \cO\left(\frac{|\cX|^2M}{\log M}\right).
   \end{equation}
\end{theorem}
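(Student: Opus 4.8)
The plan is to split $\cJ(N)=\sum_{p\le N}\#\{(x,y)\in\cX\times\cX:\ x\equiv y\pmod p\}$, the sum being over primes $p\le N$, into a diagonal and an off-diagonal part. The diagonal pairs $x=y$ contribute exactly $\pi(N)|\cX|$, since $x\equiv x\pmod p$ for every prime. Writing
$$
  \cE(N)=\sum_{p\le N}\#\{(x,y)\in\cX\times\cX:\ x\ne y,\ p\mid x-y\}
$$
for the off-diagonal count, we have $\cJ(N)=\pi(N)|\cX|+\cE(N)$, so the theorem reduces to the bound $\cE(N)=\cO\!\left(|\cX|^2M/\log M\right)$; the trivial inequality $\cE(N)\ge 0$ simultaneously yields the matching lower bound $\cJ(N)\ge\pi(N)|\cX|$.

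To estimate $\cE(N)$ I would interchange the order of summation. For a fixed pair $(x,y)$ with $x\ne y$ and $x,y\in\{1,\dots,10^M\}$ one has $1\le|x-y|<10^M$, and the number of primes $p\le N$ dividing $x-y$ is at most $\omega(|x-y|)$, the total number of distinct prime divisors of $|x-y|$. Hence
$$
  \cE(N)\le\sum_{\substack{x,y\in\cX\\ x\ne y}}\omega(|x-y|)\le|\cX|^2\max_{1\le m<10^M}\omega(m).
$$
The key input is the classical bound on the number of prime factors of a bounded integer: if $k=\omega(m)$ with $1\le m<10^M$, then $m$ is divisible by $k$ distinct primes, so $m\ge p_1p_2\cdots p_k\ge k!$, and hence $k\log k\ll\log m\ll M$ by Stirling, which forces $k\ll M/\log M$ for $M$ large. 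Substituting this into the previous display gives $\cE(N)\ll|\cX|^2M/\log M$, completing the argument.

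I do not anticipate a genuine obstacle here; the only point requiring care is that the estimate $\omega(m)\ll M/\log M$ be uniform over the whole range $1\le m<10^M$ rather than merely asymptotic in $m$, and this is exactly what the elementary primorial inequality $m\ge\omega(m)!$ delivers. It is worth remarking that the error term is of the correct order of magnitude: if $\cX$ is an arithmetic progression whose common difference is the product of the first $k\asymp M/\log M$ primes, then $\omega(x-y)\gg M/\log M$ for every pair $(x,y)$ with $x\ne y$, so once $N$ exceeds a suitable constant multiple of $M$ (so that these small primes are actually counted) no smaller error term can hold.
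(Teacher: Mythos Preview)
Your proposal is correct and follows essentially the same argument as the paper: split into diagonal and off-diagonal contributions, bound the off-diagonal part by $\sum_{x\ne y}\omega(|x-y|)$, and then invoke the classical $\omega(n)\ll (\log n)/\log\log n$ bound. The only cosmetic difference is that you derive the uniform bound $\omega(m)\ll M/\log M$ directly from the primorial inequality $m\ge\omega(m)!$, whereas the paper quotes the standard estimate and separately disposes of the few pairs with $|x-y|<4$; your handling is arguably cleaner, and your remark on sharpness is a nice addition not present in the paper.
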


   Using Theorem~\ref{thm:average} we can get the following result on the value set 
   of any sequence modulo most of the primes $p$.

\begin{theorem}\label{thm:main}
     For $\pi(N)(1+\cO(1/\Delta))$
     prime numbers $p\le N,$ we have the following asymptotic formula
   \begin{equation}\label{eq:Size}
      \#\{x \pmod p \;:\;x \in \cX\;\}= |\cX| +
         \cO \left(\frac{|\cX|}{1 + \frac{\pi(N)\log M}{M|X|\Delta}}\right),
   \end{equation}
     where $\Delta=\Delta(N)$ is any function with $\Delta\to \infty.$
\end{theorem}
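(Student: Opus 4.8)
The plan is to deduce Theorem~\ref{thm:main} from Theorem~\ref{thm:average} by a short averaging argument over the primes. We may assume $\cX\neq\emptyset$, so that the quantity $r(p):=\#\{x\bmod p:x\in\cX\}$ satisfies $1\le r(p)\le|\cX|$ for every prime $p$. Fix a prime $p\le N$ and split $\cX$ into its residue classes modulo $p$; if these classes have cardinalities $m_1(p),\dots,m_{r(p)}(p)$, then $\sum_i m_i(p)=|\cX|$, while the number of pairs $(x,y)\in\cX\times\cX$ with $x\equiv y\pmod p$ equals $\sum_i m_i(p)^2$. Since each $m_i(p)\ge 1$, the elementary inequality $m(m-1)\ge m-1$ yields
$$
   |\cX|-r(p)=\sum_i\bigl(m_i(p)-1\bigr)\le\sum_i m_i(p)\bigl(m_i(p)-1\bigr)=\sum_i m_i(p)^2-|\cX|.
$$

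Next I would sum over all primes $p\le N$. Recalling that $\cJ(N)=\sum_{p\le N}\sum_i m_i(p)^2$ by definition, Theorem~\ref{thm:average} gives
$$
   \sum_{p\le N}\bigl(|\cX|-r(p)\bigr)\le\cJ(N)-\pi(N)|\cX|=\cO\!\left(\frac{|\cX|^2M}{\log M}\right).
$$
All summands on the left are non-negative, so Markov's inequality shows that for any $\lambda>0$ the number of primes $p\le N$ with $|\cX|-r(p)>\lambda$ is $\cO\!\bigl(|\cX|^2M/(\lambda\log M)\bigr)$. Choosing $\lambda$ to be a suitable constant multiple of $|\cX|^2M\Delta/(\pi(N)\log M)$ makes this count of ``bad'' primes $\cO(\pi(N)/\Delta)$; consequently, for $\pi(N)(1+\cO(1/\Delta))$ primes $p\le N$ we have $|\cX|-r(p)=\cO\!\bigl(|\cX|^2M\Delta/(\pi(N)\log M)\bigr)$.

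Finally I would combine this estimate with the trivial bound $|\cX|-r(p)\le|\cX|$ (valid since $r(p)\ge 1$). Applying $\min(a,b)\le 2ab/(a+b)$ with $a=|\cX|$ and $b\asymp|\cX|^2M\Delta/(\pi(N)\log M)$ rewrites the minimum of the two bounds as
$$
   \cO\!\left(\frac{|\cX|}{1+\dfrac{\pi(N)\log M}{M|\cX|\Delta}}\right),
$$
which is exactly the error term in~\eqref{eq:Size}; since $r(p)\le|\cX|$ for every $p$, the asserted asymptotic formula follows. None of this is really difficult once Theorem~\ref{thm:average} is available — the substance is just Markov's inequality applied to a non-negative sequence — so there is no serious obstacle; the only point requiring a little care is this last step, where one interpolates between the averaging estimate and the trivial bound in order to recover the precise shape of the displayed error term, in particular its correct behaviour both when $\pi(N)\log M$ dominates $M|\cX|\Delta$ and in the opposite regime.
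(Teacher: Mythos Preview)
Your argument is correct and follows essentially the same route as the paper: both sum $J_p-|\cX|\ge 0$ over primes $p\le N$, invoke Theorem~\ref{thm:average} to bound the total by $\cO(|\cX|^2M/\log M)$, and apply Markov's inequality to conclude that $J_p-|\cX|=\cO\bigl(|\cX|^2M\Delta/(\pi(N)\log M)\bigr)$ for all but $\cO(\pi(N)/\Delta)$ primes. The one difference lies in the last step: where you combine the pointwise inequality $|\cX|-r(p)\le J_p-|\cX|$ with the trivial bound $|\cX|-r(p)\le|\cX|$ via the harmonic--mean inequality $\min(a,b)\le 2ab/(a+b)$, the paper instead uses the Cauchy--Schwarz estimate $r(p)\ge|\cX|^2/J_p$, which yields directly
\[
   |\cX|-r(p)\;\le\;\frac{|\cX|\,(J_p-|\cX|)}{J_p}\;=\;\frac{|\cX|}{1+|\cX|/(J_p-|\cX|)}
\]
and hence the stated error term in one stroke. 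Both derivations are valid; the Cauchy--Schwarz route is marginally slicker in that the interpolated shape of the error term appears automatically, without the separate appeal to the trivial bound.
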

   In particular we have the following corollary which can be applied for a large class of sparse sequences.
\begin{corollary}\label{coro:asympSparseSet}
     If $M |\cX|\Delta^2 \le \pi(N)\log M ,$ then
   \begin{equation}\label{eq:SizeSparse}
      \#\{x \pmod p \;:\;x \in \cX\;\}= |\cX|\left( 1 +
         \cO \left({\Delta}^{-1}\right)\right ).
   \end{equation}
\end{corollary}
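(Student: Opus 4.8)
The plan is to obtain the corollary as an immediate specialization of Theorem~\ref{thm:main}: one feeds in the same function $\Delta$ and then simplifies the error term using the quantitative hypothesis $M|\cX|\Delta^2 \le \pi(N)\log M$.

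First I would apply Theorem~\ref{thm:main} verbatim. This already tells us that for $\pi(N)\left(1 + \cO(1/\Delta)\right)$ primes $p \le N$ we have
\[
   \#\{x \pmod p \;:\; x \in \cX\} = |\cX| + \cO\!\left(\frac{|\cX|}{1 + \dfrac{\pi(N)\log M}{M|\cX|\Delta}}\right).
\]
So the only work left is to show that under the hypothesis of the corollary the $\cO$-term is of size $\cO(|\cX|/\Delta)$.

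Dividing the assumed inequality $M|\cX|\Delta^2 \le \pi(N)\log M$ by the positive quantity $M|\cX|\Delta$, one gets
\[
   \Delta \;\le\; \frac{\pi(N)\log M}{M|\cX|\Delta},
\]
hence $1 + \dfrac{\pi(N)\log M}{M|\cX|\Delta} \ge \Delta$, and therefore
\[
   \frac{|\cX|}{1 + \dfrac{\pi(N)\log M}{M|\cX|\Delta}} \;\le\; \frac{|\cX|}{\Delta}.
\]
Plugging this bound back into the asymptotic formula from Theorem~\ref{thm:main} yields
\[
   \#\{x \pmod p \;:\; x \in \cX\} = |\cX| + \cO\!\left(\frac{|\cX|}{\Delta}\right) = |\cX|\bigl(1 + \cO(\Delta^{-1})\bigr),
\]
which is precisely~\eqref{eq:SizeSparse}, valid for the same collection of $\pi(N)\left(1+\cO(1/\Delta)\right)$ primes $p \le N$ furnished by the theorem.

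There is essentially no obstacle here: the corollary is a one-line consequence of Theorem~\ref{thm:main} together with a single rearrangement of the hypothesis. The only things to keep an eye on are the bookkeeping of the exceptional set of primes — the estimate~\eqref{eq:SizeSparse} inherits an exceptional set of size $\cO(\pi(N)/\Delta)$ from the theorem — and the trivial remark that $\Delta \to \infty$ makes $\cO(\Delta^{-1}) = o(1)$, so that~\eqref{eq:SizeSparse} is a genuine asymptotic identity rather than a vacuous one.
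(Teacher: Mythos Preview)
Your proof is correct and matches the paper's treatment: the corollary is stated there as an immediate consequence of Theorem~\ref{thm:main}, and your single rearrangement of the hypothesis $M|\cX|\Delta^2 \le \pi(N)\log M$ into $\Delta \le \pi(N)\log M/(M|\cX|\Delta)$ is exactly the intended simplification of the error term.
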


\subsection{Additive properties of Fibonacci numbers}

    Theorem~\ref{thm:main} finds application on additive problems for well known very fast increasing sequences.
    For example the following theorems on additive properities of the Fibonacci sequence $\{F_n\}.$
\begin{theorem}\label{thm:Waring_0}
    For \mbox{$\pi(N)(1+o(1))$}
    primes $p\le N,$ every integer $\lambda$ can be written as
  $$
    F_{n_1}+\ldots +F_{n_{16}}\equiv \lambda \pmod p,
  $$
    where $1 \le n_1, \ldots, n_{16} \le N^{1/2+o(1)}.$
\end{theorem}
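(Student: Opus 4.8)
The plan is to combine the value-set estimate of Theorem~\ref{thm:main} with a sum-product type argument in $\rF_p$. First I would take $\cX=\{F_n : 1\le n\le N^{1/2+\varepsilon}\}$. Since $F_n$ grows like $\phi^n$ with $\phi=(1+\sqrt5)/2$, the largest element of $\cX$ is at most $10^M$ with $M=\cO(N^{1/2+\varepsilon})$, and $|\cX|=N^{1/2+\varepsilon}$. With $\Delta=\Delta(N)\to\infty$ chosen to grow slowly, the error term in~\eqref{eq:Size} is controlled because $M|\cX|\Delta^2$ is of size $N^{1+o(1)}\Delta^2$, which is comparable to $\pi(N)\log M\asymp N/\log N\cdot\log M\asymp N^{1+o(1)}$; so Corollary~\ref{coro:asympSparseSet} essentially applies and for $\pi(N)(1+o(1))$ primes $p\le N$ the Fibonacci numbers $F_n$, $n\le N^{1/2+o(1)}$, occupy $(1+o(1))|\cX|$ distinct residues modulo $p$, i.e.\ a subset $\cA\subset\rF_p$ of size $|\cA|=N^{1/2+o(1)}=p^{1/2+o(1)}$.

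Next I would exploit the arithmetic structure of the Fibonacci sequence. The key point is that $\{F_n \bmod p\}$ is not an arbitrary set: using the matrix identity $\begin{pmatrix}F_{n+1}&F_n\\F_n&F_{n-1}\end{pmatrix}=\begin{pmatrix}1&1\\1&0\end{pmatrix}^n$, or equivalently the Binet-type relation in $\rF_p$ (or $\rF_{p^2}$) $F_n=(\phi^n-\bar\phi^n)/(\phi-\bar\phi)$ where $\phi,\bar\phi$ are the roots of $X^2-X-1$ over $\rF_p$ with $\phi\bar\phi=-1$, one sees that the set $\cA$ is closely tied to a geometric progression. Concretely, since $2$ (and hence $2$ is a unit) one has $2F_n = \phi^n - \bar\phi^n$ up to the fixed factor $\sqrt5$; controlling $\bar\phi^n=(-1)^n\phi^{-n}$ shows that sums of a bounded number of $F_n$'s are, up to units, sums of elements from a multiplicative subgroup or its small translates. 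I would invoke the Erd\H os--Murty~\cite{ErdosMurty} style argument: for $\pi(N)(1+o(1))$ primes $p\le N$ the element $\phi$ has multiplicative order $\ge p^{1/2+o(1)}$ in $\rF_{p^2}^\ast$, so the relevant geometric progression is itself large.

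Then I would apply Glibichuk's theorem~\cite{Glibichuk}: if $\cB\subset\rF_p$ with $|\cB|>\sqrt p$ and $\cB=-\cB$ (or more generally $\cB$ not contained in a coset of a proper subfield), then $8\cB=\rF_p$, i.e.\ the eightfold sumset covers everything. The set $\cA$ of Fibonacci residues has $|\cA|=p^{1/2+o(1)}$, which is $>\sqrt p$ up to the $o(1)$; to get a clean $>\sqrt p$ I would take $\varepsilon$ slightly positive so that $|\cA|\ge p^{1/2+\varepsilon/2}>\sqrt p$ for all large $p$ in our density-one set. One also needs the symmetry or non-degeneracy hypothesis: $\cA$ need not satisfy $\cA=-\cA$, but the symmetrized set $\cA\cup(-\cA)$ does, still has size $>\sqrt p$, and $8(\cA\cup(-\cA))\supseteq 8\cA - $ (nothing); more carefully, $16$ arises because $\cA-\cA\subseteq 2\cdot(\cA\cup(-\cA))$ has size $>\sqrt p$ and is symmetric, so Glibichuk gives $8(\cA-\cA)=\rF_p$, hence $\sum_{i=1}^{16}\pm F_{n_i}$ covers $\rF_p$; absorbing the signs into the index parity via $-F_n\equiv F_{n+3k}$-type identities (or simply noting $-F_n = F_{-n}$ extended appropriately, adjusted by a bounded shift to stay in range $n\le N^{1/2+o(1)}$) converts this to a genuine sum of $16$ Fibonacci numbers $F_{n_i}$ with all $n_i\le N^{1/2+o(1)}$.

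The main obstacle I expect is the last bookkeeping step: ensuring that the sign changes and the passage from $\cA-\cA$ (or $\cA\cup(-\cA)$) back to genuine Fibonacci numbers do not force the indices outside the range $n\le N^{1/2+o(1)}$, and that the symmetrization does not cost an extra factor in the number of summands beyond $16$. Handling $-F_n$ requires the identity $F_{-n}=(-1)^{n+1}F_n$, which is a statement about negative indices; to keep positive indices one instead writes $-F_n\equiv F_{m}-F_{m'}$ for suitable $m,m'\le N^{1/2+o(1)}$ using $F_{m+2}-F_m=F_{m+1}$ type telescoping, or more robustly one applies Glibichuk directly to the symmetric set $\cA'=(\cA-\cA)$ and then notes each element of $\cA'$ is a difference of two Fibonacci numbers, and each such difference is itself $\equiv$ a sum of two Fibonacci numbers by a translation identity $F_a-F_b\equiv F_{a}+F_{c}$ where $F_c\equiv -F_b$; the cleanest route is probably to prove a small lemma that every residue is a sum of $8$ elements of $\cA\cup(-\cA)$ and separately that $-F_n$ with $n$ in range equals a Fibonacci number in range modulo $p$ for density-one $p$ (false in general, so instead absorb the $-1$ into $8\to16$ by pairing). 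I would carry out this reduction carefully, since it is the only place where the specific Fibonacci recurrence — rather than just the size of $\cA$ — is genuinely used.
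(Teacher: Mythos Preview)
Your proposal has a genuine gap at the Glibichuk step. The statement you invoke --- that a symmetric set $\cB\subset\rF_p$ with $|\cB|>\sqrt p$ satisfies $8\cB=\rF_p$ --- is false: take $\cB=\{-\lfloor\sqrt p\rfloor,\ldots,\lfloor\sqrt p\rfloor\}$, a symmetric arithmetic progression; then $8\cB$ is still an interval of length $O(\sqrt p)$, far from all of $\rF_p$. Glibichuk's theorem (Lemma~\ref{lemma:Glibichuk}) is a statement about \emph{sums of products}: if $|\cA||\cB|>2p$ then $8\,\cA\!\cdot\!\cB=\rF_p$, i.e.\ every residue is $a_1b_1+\cdots+a_8b_8$. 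The multiplicative ingredient is essential, and your argument never supplies a second set to multiply against; the Binet heuristics in your second paragraph gesture at multiplicative structure but are not turned into a usable set $\cB$.

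The paper supplies exactly this missing piece via the Lucas sequence. It takes $\cF=\{F_{2n}:\delta N^{1/2}/10<n\le\delta N^{1/2}/5\}$ and $\cL=\{L_{2m}:1\le m\le N^{1/2}/\sqrt\delta\}$ with $\delta=e^{(\log N)^\rho}$. For $\cF$ it proves injectivity modulo $p$ \emph{directly} from the order-of-appearance bound $z(p)>N^{1/2}\delta$ (Lemma~\ref{lemma:OrderApp}) together with the identities~\eqref{eq:FibIdentity1}--\eqref{eq:FibIdentity2}, while Theorem~\ref{thm:main} handles $\cL$. Then $|\cF\pmod p|\,|\cL\pmod p|\gg\sqrt\delta\,N>2p$, so Lemma~\ref{lemma:Glibichuk} writes every $\lambda$ as $\sum_{i=1}^8 F_{2n_i}L_{2m_i}$. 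The identity $F_uL_v=F_{u+v}+(-1)^vF_{u-v}$, with $v=2m_i$ even and $n_i>m_i$ guaranteed by the chosen ranges, converts each product into $F_{2(n_i+m_i)}+F_{2(n_i-m_i)}$: two genuine Fibonacci numbers with positive indices $\le N^{1/2+o(1)}$. This yields the $16$ summands with no sign bookkeeping at all --- the issue you flag as the ``main obstacle'' simply does not arise.

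A secondary problem: your first step does not actually go through either. Corollary~\ref{coro:asympSparseSet} requires $M|\cX|\Delta^2\le\pi(N)\log M$; with $M,|\cX|\asymp N^{1/2+\varepsilon}$ the left side is $\gg N^{1+2\varepsilon}$ while the right is $O(N)$, so the hypothesis fails for any fixed $\varepsilon>0$ and any $\Delta\to\infty$. The paper sidesteps this by taking $\cL$ slightly \emph{smaller} than $N^{1/2}$ (so Theorem~\ref{thm:main} applies) and compensating with $\cF$ slightly larger, the latter controlled not by Theorem~\ref{thm:main} but by the order of appearance.
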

   This improves the result of Garc\'\i a, Luca and Mej\'\i a~\cite{GarciaLucaMejia} on the representation of
   any residue class $\lambda$ in the  form
 $$
    F_{n_1}+\ldots +F_{n_{32}}\equiv \lambda \pmod p,
 $$
    for certain integers $n_1, \ldots, n_{32},$ for almost all primes $p.$
    Moreover, combining Theorem~\ref{thm:main} with exponential sum techniques we
    obtain a more general result.

 \begin{theorem}\label{thm:Waring_1}
    Let $0 < \varepsilon <1/2.$ There is an integer $s < 100/\varepsilon$  such that for
    $\pi(N)(1 + o(1))$ primes $p \le N,$ every integer $\lambda$ can be written as
  $$
    F_{n_1}+\ldots +F_{n_s}\equiv \lambda \pmod p,
  $$
    where $n_i \le N^{\varepsilon},$ $i=1,\ldots, s.$
 \end{theorem}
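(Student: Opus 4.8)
The plan is to combine Theorem~\ref{thm:main} (applied to a suitably chosen sparse set built from Fibonacci numbers) with the circle method over $\rF_p$, so that the exponential sum over the residues of $F_n$ has square-root cancellation on average over $p$. First I would fix $\gamma = \gamma(\varepsilon)$ slightly below $\varepsilon$ and set $\cX = \{F_n : 1 \le n \le T\}$ with $T = N^{\gamma}$; since $F_n \le 10^M$ forces $M = \cO(T)$, Corollary~\ref{coro:asympSparseSet} (with an appropriate slowly growing $\Delta$) shows that for $\pi(N)(1+o(1))$ primes $p \le N$ the residues $F_1,\dots,F_T$ are essentially all distinct mod $p$, so $\#\{F_n \bmod p : n \le T\} = T(1+o(1))$. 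Call such $p$ "good". For a good prime $p$, standard arguments (e.g. via the Erdős–Murty-type spacing, or directly from the near-injectivity just obtained together with a second-moment bound) give that the set $\cA_p = \{F_n \bmod p : n \le T\}$ has additive energy $E(\cA_p) \le |\cA_p|^{2+o(1)}$ — equivalently, the fourth moment $\frac1p\sum_{x}|S_p(x)|^4 \le |\cA_p|^{2+o(1)}$, where $S_p(x) = \sum_{n \le T} e^{2\pi i x F_n/p}$. One has to be a little careful here: the relevant quantity is really the number of solutions of $F_{n_1}+F_{n_2} \equiv F_{n_3}+F_{n_4} \pmod p$, and I would bound this by first counting solutions of the corresponding \emph{integer} equation $F_{n_1}+F_{n_2} = F_{n_3}+F_{n_4} + kp$ for $|k| \ll F_T/p$, using the Zeckendorf/digit structure of Fibonacci numbers (or the explicit Binet formula) to control the number of representations of an integer as a sum of two Fibonacci numbers, and then summing over the admissible $k$; averaging the error over $p$ absorbs the off-diagonal contribution, exactly as in the proof scheme underlying Theorem~\ref{thm:average}.

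Once the fourth-moment bound $\frac1p\sum_{x}|S_p(x)|^4 \ll T^{2+o(1)}$ is in hand for a good prime $p$, I would run the standard $\rF_p$-circle-method argument. For $s$ even, the number $R_s(\lambda)$ of representations $F_{n_1}+\cdots+F_{n_s} \equiv \lambda \pmod p$ with each $n_i \le T$ equals $\frac1p\sum_{x=0}^{p-1} S_p(x)^s e^{-2\pi i x\lambda/p}$. The main term is $x=0$, contributing $T^s/p$. For the error terms I would write $|S_p(x)|^s = |S_p(x)|^4 \cdot |S_p(x)|^{s-4}$ and use the trivial bound $|S_p(x)| \le T$ only after first extracting cancellation: more efficiently, bound $\sum_{x \ne 0}|S_p(x)|^s \le \big(\max_{x \ne 0}|S_p(x)|\big)^{s-4}\sum_x |S_p(x)|^4$. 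The point is that for a good prime one also has a pointwise bound $\max_{x \ne 0}|S_p(x)| \le T^{1-\delta(\gamma)+o(1)}$ for almost all good $p$ — this follows from the fourth-moment estimate itself by a dyadic/large-sieve pigeonholing over $p$, since a prime for which some $x$ gives $|S_p(x)|$ close to $T$ would force the energy to be large, and there are few such primes. Plugging in, $\sum_{x\ne 0}|S_p(x)|^s \le T^{(1-\delta)(s-4)}\cdot T^{2+o(1)}$, which is $o(T^s/p)$ once $p \le N$ and $s-4 > (2 + \log_T p)/\delta$; since $\log_T p \le 1/\gamma$, this is guaranteed by $s \asymp 1/\gamma \asymp 1/\varepsilon$, and a careful bookkeeping of the constants gives $s < 100/\varepsilon$. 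Then $R_s(\lambda) > 0$ for every $\lambda$, which is the claim.

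The main obstacle I anticipate is the energy/fourth-moment bound for $\cA_p$ on average over $p$ — i.e. showing that $F_{n_1}+F_{n_2}\equiv F_{n_3}+F_{n_4} \pmod p$ has only $T^{2+o(1)}$ solutions for almost all $p$. The diagonal gives $\gg T^2$ trivially; the difficulty is the off-diagonal, where one needs that a fixed nonzero integer $F_{n_1}+F_{n_2}-F_{n_3}-F_{n_4}$ (of size $\ll F_T = e^{\cO(T)}$) is divisible by few primes $p \le N$, on average over the quadruple. This is precisely the kind of divisor-counting estimate that powers Theorem~\ref{thm:average}, so I would reduce to it: the number of quadruples for which $p \mid (F_{n_1}+F_{n_2}-F_{n_3}-F_{n_4})$ and the integer equation fails is controlled by applying Theorem~\ref{thm:average} to the set $\cX' = \{F_n + F_m : n,m \le T\} \subseteq \{1,\dots,10^{M'}\}$ with $|\cX'| \le T^2$ and $M' = \cO(T)$, whose output $\cJ'(N) = \pi(N)|\cX'| + \cO(|\cX'|^2 T/\log T)$ gives exactly the required savings provided $\gamma$ is chosen small enough that $T^2 \cdot T \le \pi(N)\log T$, i.e. $3\gamma < 1$. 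This is where the constraint $\gamma < \varepsilon < 1/2$ (and ultimately the loss of a constant factor between $s$ and $1/\varepsilon$) comes from, and handling the interface between the integer and mod-$p$ equations cleanly is the technical heart of the argument.
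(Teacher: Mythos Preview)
Your approach has a genuine gap at the pointwise step. From the fourth-moment (energy) bound $\frac{1}{p}\sum_x |S_p(x)|^4 \ll T^{2+o(1)}$ you cannot deduce $\max_{x\ne 0}|S_p(x)|\le T^{1-\delta}$ for almost all $p\le N$ when $T=N^\gamma$ with $\gamma<1/2$. Indeed, a single bad frequency $x_0$ with $|S_p(x_0)|=L$ forces only $L^4\le p\,T^{2+o(1)}$, i.e.\ $L\le N^{(1+2\gamma)/4+o(1)}$, which is \emph{worse} than the trivial bound $T=N^\gamma$ precisely when $\gamma<1/2$. Averaging over $p$ does not help: if $\cP_{\mathrm{bad}}$ is the set of primes with such an $x_0$, then $\sum_{p\in\cP_{\mathrm{bad}}}L^4/p\le\sum_{p\le N}T_p\ll\pi(N)T^2$, hence $|\cP_{\mathrm{bad}}|\ll \pi(N)N^{1+2\gamma}/L^4$, and this is $o(\pi(N))$ only if $L>N^{(1+2\gamma)/4}$, again requiring $\gamma>1/2$. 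The same obstruction persists if you replace the fourth moment by the $2k$-th moment: Theorem~\ref{thm:average}/Lemma~\ref{lemma:asympJ_p} applied to $\{F_{n_1}+\cdots+F_{n_k}\}$ needs $(k+1)\gamma<1$, while beating the main term $T^s/p$ with only the trivial pointwise bound $|S_p(x)|\le T$ on the remaining $s-2k$ factors needs $k\gamma>1$; these are incompatible. So the pure circle-method route, with moments coming solely from Theorem~\ref{thm:average}, cannot close.

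The paper sidesteps this entirely by exploiting a \emph{multiplicative} identity rather than seeking cancellation in $S_p$. It uses $L_mF_{2n-1}=F_{m+2n-1}+F_{m-2n+1}$ to manufacture a product structure: with $X=\{\sum_{i=1}^k F_{2n_i-1}\}$, $Y=\{L_m\}$, $Z=\{\sum_{j=1}^k F_{2\ell_j}\}$ and indices bounded by $N^{1/(k+2)}$, Corollary~\ref{coro:asympSparseSet} gives $|X\bmod p|\,|Y\bmod p|\,|Z\bmod p|^2>p^3$ for almost all $p$, and then Lemma~\ref{lemma:ternary} (a S\'ark\"ozy-type bilinear estimate, which hides the needed nontrivial exponential-sum input) solves $xy+z_1+z_2\equiv\lambda$. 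Expanding $L_m\cdot\sum F_{2n_i-1}$ via the identity turns the product back into a sum of $2k$ Fibonacci numbers, giving $s=4k=4([8/\varepsilon]-1)$. The missing idea in your plan is precisely this Lucas--Fibonacci product-to-sum identity, which supplies the bilinear structure that the circle method alone cannot.
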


    Observe that the number of terms on the sumatory has the expected order, apart from the value 100.
    Indeed we obtain $s=4([8/\varepsilon]-1).$
    However, we do not consider a reduction to be
    essential in this paper.

\subsection{Application to the discrete Littlewood \mbox{problem}}

The following theorem presents an
   application of a result of Bourgain and Garaev~\cite[Theorem 1.4]{BourgainGaraev}.

\begin{theorem}\label{thm:L1PrimitiveRoot}
   Let $g$ be a primitive root modulo $p.$ If $N < p^{1/2}$ then
 $$
   \frac{1}{p}\sum_{x=0}^{p-1}\left|\sum_{n\le N}e^{2 \pi i  \frac{x}{p}g^n}\right|\gg N^{1/48 + o(1)}.
 $$
\end{theorem}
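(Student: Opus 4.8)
The plan is to combine Karatsuba's inequality~\eqref{ineq:KaratsubaL1} with the additive-energy bound of Bourgain and Garaev for the orbit $\{g^n \bmod p : 1 \le n \le N\}$. Set $f(n) = g^n \bmod p$ viewed as an integer in $\{0,1,\ldots,p-1\}$, and $\gamma_n \equiv 1$. Then the left-hand side of the theorem is exactly the discretized version of $\int_0^1 |\sum_{n\le N} e^{2\pi i \alpha f(n)}|\,d\alpha$: since each $f(n)$ is an integer, the sum $\sum_{n\le N} e^{2\pi i (x/p) f(n)}$ depends on $x$ only modulo $p$, and $\frac{1}{p}\sum_{x=0}^{p-1}|\cdots|$ is the average over the $p$-th roots of unity. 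First I would recall (or reprove in a line) that for a trigonometric polynomial whose frequencies all lie in $\{0,\ldots,p-1\}$, this discrete average coincides with $\int_0^1|\cdots|\,d\alpha$ by orthogonality of characters — so Karatsuba's bound applies verbatim with $N$ replaced by our $N$ and $J$ the number of solutions of $g^{n}+g^{m} \equiv g^{k}+g^{\ell}$ in integers (not mod $p$!), $1\le n,m,k,\ell\le N$.

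The key input is that because $N < p^{1/2}$, each of $g^n, g^m, g^k, g^\ell$ when reduced mod $p$ lies in $\{0,\ldots,p-1\}$, and the sums $g^n+g^m$, $g^k+g^\ell$ (as residues) lie in a range where the integer equation $f(n)+f(m)=f(k)+f(\ell)$ is equivalent to the congruence $f(n)+f(m)\equiv f(k)+f(\ell)\pmod p$: indeed two integers in $\{0,\ldots,2p-2\}$ that are congruent mod $p$ and that we know to be equal... — here one must be slightly careful, so the cleaner route is to bound $J$ directly by the additive energy $E$ of the set $\mathcal{G}=\{g^n \bmod p\}$ in $\mathbb{F}_p$, noting $J \le E$ since every integer solution is a fortiori a solution of the congruence. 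Bourgain and Garaev~\cite[Theorem~1.4]{BourgainGaraev} give, for $N < p^{1/2}$, the bound $E \le N^{3 - 1/24 + o(1)}$ on the additive energy of the orbit (this is the precise exponent producing $1/48$ after taking the square root in Karatsuba's inequality). Substituting into~\eqref{ineq:KaratsubaL1}:
\[
   \frac{1}{p}\sum_{x=0}^{p-1}\left|\sum_{n\le N}e^{2\pi i \frac{x}{p}g^n}\right|
   \;\ge\; \left(\frac{N^3}{J}\right)^{1/2}
   \;\ge\; \left(\frac{N^3}{N^{3-1/24+o(1)}}\right)^{1/2}
   \;=\; N^{1/48 + o(1)},
\]
which is the claimed estimate.

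The main obstacle is the bookkeeping that makes the reduction to the additive energy in $\mathbb{F}_p$ legitimate — specifically, verifying that Karatsuba's inequality, whose count $J$ is over the \emph{integer} equation, is correctly bounded by the \emph{mod-$p$} additive energy $E$ that Bourgain--Garaev control, and that the discrete $L_1$-average over $x \in \{0,\ldots,p-1\}$ equals the continuous integral appearing in~\eqref{ineq:KaratsubaL1}. Both reduce to orthogonality relations for additive characters of $\mathbb{F}_p$ together with the size restriction $N<p^{1/2}$ (which keeps the relevant integers small enough that no nontrivial wraparound obstruction arises, and guarantees $|\mathcal{G}|=N$, i.e.\ the powers $g^n$ are distinct mod $p$, so that the additive-energy bound is being applied to a genuine $N$-element set). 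Once these two points are dispatched, the rest is a direct substitution. I would also remark that the $o(1)$ is in the $N\to\infty$ sense and is inherited entirely from the $o(1)$ in the Bourgain--Garaev exponent.
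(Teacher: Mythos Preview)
Your overall plan---H\"older between the first, second, and fourth moments, followed by the Bourgain--Garaev additive-energy bound---is exactly the paper's argument, and the final substitution is correct. The gap is in the reduction step. The claim that for a trigonometric polynomial with frequencies in $\{0,\ldots,p-1\}$ the discrete average $\frac{1}{p}\sum_{x=0}^{p-1}|F(x/p)|$ equals $\int_0^1 |F(\alpha)|\,d\alpha$ is false, and it does not follow from orthogonality: orthogonality of characters gives this identity for $|F|^2$ (or any \emph{polynomial} expression in $F,\bar F$ whose frequency support avoids nonzero multiples of $p$), but $|F|$ is not a trigonometric polynomial. A concrete counterexample is $p=2$, $F(\alpha)=1-e^{2\pi i\alpha}$: the discrete average is $\tfrac{1}{2}(|F(0)|+|F(1/2)|)=1$, while $\int_0^1|F|=\int_0^1 2|\sin\pi\alpha|\,d\alpha=4/\pi>1$. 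So the inequality can go the wrong way, and you cannot transfer the continuous Karatsuba bound to the discrete $L_1$ average this way.

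The fix is not to pass through the continuous inequality at all, but to run Karatsuba's H\"older argument directly in $\rF_p$, which is what the paper does. Since $N<p^{1/2}<p-1$, the residues $g^n$, $1\le n\le N$, are pairwise distinct, so
\[
\frac{1}{p}\sum_{x=0}^{p-1}\Bigl|\sum_{n\le N}e^{2\pi i\frac{x}{p}g^n}\Bigr|^2=N,
\qquad
\frac{1}{p}\sum_{x=0}^{p-1}\Bigl|\sum_{n\le N}e^{2\pi i\frac{x}{p}g^n}\Bigr|^4=T,
\]
with $T$ the number of solutions of $g^a+g^b\equiv g^c+g^d\pmod p$, $1\le a,b,c,d\le N$. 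H\"older with exponents $3/2$ and $3$ then gives $N\le T^{1/3}\bigl(\frac{1}{p}\sum_x|\cdot|\bigr)^{2/3}$, and Lemma~\ref{lemma:EnergyPrimRoots} finishes as you wrote. With this route your detour through the integer equation $f(n)+f(m)=f(k)+f(\ell)$ and the inequality $J\le E$ becomes unnecessary: the relevant fourth moment is already the mod-$p$ energy $T$.
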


  Regarding the Fibonacci sequence, we prove the following theorem.

\begin{theorem}\label{thm:L1Fibonacci}
     Let $0<\gamma <1/3.$ There are two positive absolute constants $c_1=c_1(\gamma), c_2=c_2(\gamma)$
     such that for  $\pi(N)(1 + o(1))$ primes, $p \le N,$ we have
   $$
     c_1 N^{\gamma/2}\le \frac{1}{p}\sum_{x=0}^{p-1}\left|\sum_{n\le N^{\gamma}}
     e^{2 \pi i \frac{x}{p}F_n}\right|
      \le c_2 N^{\gamma/2}.
   $$
\end{theorem}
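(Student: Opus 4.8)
The plan is to obtain the upper and lower bounds by matching Karatsuba's inequality~\eqref{ineq:KaratsubaL1} against the trivial bound, with the additive energy of $\{F_n : n \le N^{\gamma}\}$ modulo $p$ as the key quantity. Write $T = N^{\gamma}$ and for a prime $p$ set
\[
  S(x) = \sum_{n \le T} e^{2\pi i \frac{x}{p} F_n}, \qquad L_1(p) = \frac{1}{p}\sum_{x=0}^{p-1} |S(x)|.
\]
For the \emph{upper bound} I would simply use Cauchy--Schwarz together with Parseval: $L_1(p) \le \bigl(\tfrac1p\sum_x |S(x)|^2\bigr)^{1/2}$, and $\tfrac1p\sum_x|S(x)|^2$ counts pairs $(n,m)$ with $F_n \equiv F_m \pmod p$. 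The point is that for almost all $p$ this is just $T(1+o(1))$: the Fibonacci numbers $F_n$ with $n \le T$ lie in $\{1,\dots,10^M\}$ for a suitable $M = O(T)$ (since $F_n \le 2^n$), and $|\cX| = T$, so Theorem~\ref{thm:average} (or Corollary~\ref{coro:asympSparseSet}, after checking $T \cdot T \cdot M / (\pi(N)\log M) = o(1)$, which holds because $M \asymp T \asymp N^{\gamma}$ with $\gamma < 1/3 < 1$ makes $T^2 M \ll N^{3\gamma} = o(N/\log N)$) gives that $\#\{F_n \bmod p\} = T(1+o(1))$ and the diagonal dominates. Hence $L_1(p) \le c_2 N^{\gamma/2}$ for $\pi(N)(1+o(1))$ primes.

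For the \emph{lower bound} I would invoke the prime-field analogue of Karatsuba's inequality: for $|\gamma_n| = 1$ one has
\[
  \frac{1}{p}\sum_{x=0}^{p-1}\left|\sum_{n\le T}\gamma_n e^{2\pi i \frac{x}{p} F_n}\right| \ge \left(\frac{T^4}{J_p}\right)^{1/2},
\]
where $J_p$ is the number of solutions of $F_n + F_m \equiv F_k + F_\ell \pmod p$ with $n,m,k,\ell \le T$. So everything reduces to the bound $J_p \le T^3 (1+o(1))$ for almost all $p \le N$. The additive energy $J$ of the \emph{integer} sequence $\{F_n : n \le T\}$ is $T^{2+o(1)}$ (this is classical — Fibonacci numbers form a Sidon-like set up to logarithmic factors, since $F_n + F_m = F_k + F_\ell$ forces $\{n,m\}$ and $\{k,\ell\}$ to nearly coincide by the exponential growth), so the integer solutions contribute only $T^{2+o(1)}$. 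The genuinely new solutions modulo $p$ are those with $F_n + F_m - F_k - F_\ell = D \ne 0$ and $p \mid D$; here $|D| \le 2 F_T \le 2^{T+1}$, so $D$ has at most $O(T/\log T)$ prime divisors, and summing over the $\le T^4$ quadruples shows $\sum_{p \le N} (J_p - J) \ll T^4 \cdot T/\log T = T^5/\log T$. Dividing by $\pi(N) \asymp N/\log N$, the average excess is $\ll T^5 / N = N^{5\gamma - 1} = o(T^3)$ precisely when $5\gamma - 1 < 3\gamma$, i.e. $\gamma < 1/2$; so in fact for $\gamma < 1/3$ this is comfortably $o(T^3)$, and by Markov's inequality $J_p = T^3(1+o(1))$ for $\pi(N)(1+o(1))$ primes. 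Feeding this into Karatsuba's inequality gives $L_1(p) \ge (T^4/J_p)^{1/2} = T^{1/2}(1+o(1)) = c_1 N^{\gamma/2}$.

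The main obstacle — and the step requiring the most care — is the counting of solutions of $F_n + F_m \equiv F_k + F_\ell \pmod p$ over \emph{all} primes simultaneously, i.e. controlling $\sum_{p\le N} J_p$. This is exactly where Theorem~\ref{thm:average} is designed to help: one applies it not to $\cX = \{F_n\}$ but to the set of pairwise sums $\cX' = \{F_n + F_m : n,m \le T\}$ (of size $\le T^2$, living in $\{1,\dots,10^{M'}\}$ with $M' \asymp T$), so that $\cJ'(N) = \sum_{p \le N}\#\{(a,b) \in \cX'^2 : a \equiv b \bmod p\}$ counts exactly $\sum_p J_p$ up to the multiplicity with which a given integer is represented as $F_n + F_m$; that multiplicity is $T^{o(1)}$ by the integer additive-energy estimate. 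Theorem~\ref{thm:average} then yields $\sum_p J_p = \pi(N)\,T^{2+o(1)} + O(T^4 M' / \log M')$, and since $T^4 M'/\log M' \ll T^5/\log T = o(\pi(N) T^3)$ when $\gamma < 1/3$, we get the desired $J_p = T^3(1+o(1))$ on average. One must be slightly careful that the error term in Theorem~\ref{thm:average} is genuinely smaller than the main term $\pi(N) T^2$ as well, to conclude the upper bound cleanly; this is where the restriction $\gamma < 1/3$ (rather than $\gamma < 1/2$) is genuinely used, guaranteeing $T^4 M'/\log M' = o(N T^2/\log N)$ since $4\gamma + \gamma - 1 < 2\gamma - 1 + \gamma$ i.e. $3\gamma < 1$. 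With both bounds in hand, intersecting the two exceptional sets (each of size $o(\pi(N))$) completes the proof.
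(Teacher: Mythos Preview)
Your approach is essentially the paper's: Cauchy--Schwarz plus Parseval for the upper bound, and the H\"older/Karatsuba inequality combined with an energy bound (obtained from Theorem~\ref{thm:average}/Lemma~\ref{lemma:asympJ_p} applied to the sumset $\{F_n+F_m\}$) for the lower bound. However, your write-up contains a pair of cancelling errors that you should fix.

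The discrete Karatsuba inequality you invoke is misstated. Repeating the H\"older step exactly as in the proof of Theorem~\ref{thm:L1PrimitiveRoot}, with $T=N^{\gamma}$, one gets
\[
  T \;\le\; \frac{1}{p}\sum_{x}|S(x)|^2 \;\le\; L_1(p)^{2/3}\,J_p^{1/3},
\]
hence $L_1(p)\ge (T^{3}/J_p)^{1/2}$, not $(T^{4}/J_p)^{1/2}$. Consequently the target energy bound is $J_p\ll T^{2}$, not $J_p\le T^{3}(1+o(1))$; the latter is essentially trivial once you know the $F_n$ are distinct modulo $p$ and would yield only $L_1(p)\gg 1$ with the correct inequality. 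Your two slips cancel numerically, which is why you still land on $T^{1/2}$, but the argument as written does not establish it. In fact your closing paragraph implicitly uses the correct target: you check that the error in Theorem~\ref{thm:average} is $o(\pi(N)\,T^{2})$, i.e.\ that $\gamma<1/3$, which is precisely what forces the average of $J_p$ to be $\asymp T^{2}$ and hence (by Markov) $J_p\ll T^{2}$ for almost all $p$. So the substance is there; just correct the exponent in Karatsuba's inequality and aim for $J_p\ll T^{2}$ throughout.

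Two minor remarks. First, the paper packages your ``sum over $p$ plus Markov'' step as Lemma~\ref{lemma:asympJ_p}, applied directly to $\cX=\{F_{n_1}+F_{n_2}\}$ with $M=N^{\gamma}$ and $\Delta=N^{(1-3\gamma)/2}$, giving $T_p\le N^{2\gamma}(1+N^{-(1-3\gamma)/2})$; there is no need to separately estimate the \emph{integer} additive energy. Second, to pass from the $J_p$ of Lemma~\ref{lemma:asympJ_p} (pairs in $\cX^2$) to $T_p$ (quadruples $(n_1,n_2,m_1,m_2)$) you need that each integer $F_{n_1}+F_{n_2}$ has $O(1)$ representations; this is immediate from Zeckendorf/exponential growth, and is also what underlies your (and the paper's) claim $|\cX|\asymp T^{2}$.
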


\section{Notation and lemmas}

   For given subsets $\cA$ and $\cB$ of the residue field $\rF_p$ and any
   integer $k\ge 2,$ as usual, we denote
 \begin{align*}
     \cA+\cB &= \{a + b \;:\;a\in \cA,\; b \in \cB\},\\
     \cA\cdot \cB &= \{a b \;:\;a\in \cA,\; b \in \cB\},\\
     k \cA &= \{a_1+\ldots+a_k\;:\; a_1,\ldots, a_k \in \cA\}.
 \end{align*}
    For any finite subset of integers $\cX$ we denote
 $$
   \cX \pmod p = \{x \pmod p \;:\; x \in \cX\}.
 $$
    The next lemma is a result of Glibichuk~\cite{Glibichuk}.
\begin{lemma}\label{lemma:Glibichuk}
     Let $\cA, \cB$ be subsets of $\rF_p$ such that $|\cA||\cB|> 2p.$ Then
  $$
     8 \cA\cdot \cB = \rF_p. \qed
  $$
\end{lemma}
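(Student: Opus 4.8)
The plan is to deduce the covering from a single cardinality estimate together with an elementary pigeonhole step. I would first write
$$
   8\cA\cdot\cB = (4\cA\cdot\cB) + (4\cA\cdot\cB),
$$
and use the fact that if $S,T\subseteq\rF_p$ satisfy $|S|+|T|>p$ then $S+T=\rF_p$: for every $\lambda$ the sets $S$ and $\lambda-T$ have total cardinality exceeding $p$ and so must intersect, giving $\lambda\in S+T$. Applying this with $S=T=4\cA\cdot\cB$ reduces the whole lemma to the single inequality $|4\cA\cdot\cB|>p/2$. The point of routing the argument this way is that the witnesses it produces are genuine all-plus representations $\lambda=s+(\lambda-s)$ with $s,\lambda-s\in 4\cA\cdot\cB$, so no sign bookkeeping is needed at the end.

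Next I would dispose of the easy regime. Fixing a single $a\in\cA$ gives $a\cdot(4\cB)\subseteq 4\cA\cdot\cB$, and fixing a single $b\in\cB$ gives $(4\cA)\cdot b\subseteq 4\cA\cdot\cB$; since dilation by a nonzero element is a bijection of $\rF_p$, the Cauchy--Davenport bound $|4\cA|\ge\min(p,4|\cA|-3)$ already yields $|4\cA\cdot\cB|\ge|4\cA|>p/2$ as soon as $|\cA|$ has density bounded below, and symmetrically for $\cB$. This settles every case except the one in which both $|\cA|$ and $|\cB|$ are of small density, where $\cA\cdot\cB$ itself may be as small as $\sqrt{2p}$ while $4\cA\cdot\cB$ must still be shown to fill more than half of $\rF_p$; this balanced regime is the heart of the matter.

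For the balanced regime the plan is a second-moment (Cauchy--Schwarz) count. Let $r(x)=\#\{(a,b)\in\cA\times\cB:ab=x\}$, so that $\sum_x r(x)=|\cA||\cB|$ and $r$ is supported on $\cA\cdot\cB$. Its fourfold additive convolution $r^{*4}$ is supported on $4\cA\cdot\cB$ with $\sum_x r^{*4}(x)=(|\cA||\cB|)^4$, whence
$$
   |4\cA\cdot\cB|\ \ge\ \frac{\left(\sum_x r^{*4}(x)\right)^2}{\sum_x \left(r^{*4}(x)\right)^2}\ =\ \frac{(|\cA||\cB|)^8}{E},
$$
where $E$ is the number of solutions of $a_1b_1+\cdots+a_4b_4=a_5b_5+\cdots+a_8b_8$ with all $a_i\in\cA$, $b_i\in\cB$. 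The diagonal contribution to $E$ is the expected $(|\cA||\cB|)^8/p$, which alone would give $|4\cA\cdot\cB|\ge p$; the task is to show the off-diagonal part does not overwhelm it, and this is exactly where the hypothesis $|\cA||\cB|>2p$ is meant to be used, forcing the main term to dominate and pushing the bound past $p/2$.

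The hard part will be the uniform upper bound on $E$, equivalently on $\tfrac1p\sum_{t\ne0}|S(t)|^8$ with $S(t)=\sum_{a\in\cA,\,b\in\cB}e^{2\pi i\,tab/p}$. Multiplicatively structured sets, such as cosets of a subgroup, make the additive energy of the product set genuinely large, so no purely additive manipulation can succeed; this is the sum--product core of the lemma, and $|\cA||\cB|>2p$ must be the exact threshold at which the diagonal overtakes the remainder. A convenient way to organize the estimate is Glibichuk's symmetrization through the difference set $\cB-\cB=\{b-b':b,b'\in\cB\}$, which replaces the unwieldy eightfold correlation by the transparent bilinear count $\#\{a(b_1-b_2)=a'(b_1'-b_2')\}$, splitting cleanly into a diagonal term of size about $|\cA|^2|\cB|^2$ and an off-diagonal term of the expected size $|\cA|^2|\cB|^4/p$. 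I expect the careful bookkeeping needed to transfer such a bound for the symmetric set $\cA\cdot(\cB-\cB)$ back to the all-plus quantity $E$, and thereby to honest representations $\lambda=a_1b_1+\cdots+a_8b_8$, to be the most technical step.
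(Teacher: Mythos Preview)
The paper does not prove this lemma; it is quoted from Glibichuk~\cite{Glibichuk} and closed with a \qed\ immediately after the statement, so there is no in-paper argument to compare against. What follows assesses your plan on its own terms and against Glibichuk's original proof.

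Your reduction to $|4\cA\cdot\cB|>p/2$ via $8\cA\cdot\cB=(4\cA\cdot\cB)+(4\cA\cdot\cB)$ and pigeonhole is correct, and Cauchy--Davenport does dispose of the regime where one of $|\cA|,|\cB|$ exceeds roughly $p/8$. But in the balanced case $|\cA|\asymp|\cB|\asymp\sqrt p$ your proposal is only a plan: you do not actually bound the energy $E$, and you flag the passage from $\cA\cdot(\cB-\cB)$ back to all-plus sums as unfinished. More seriously, the second-moment route faces a real obstruction. Take $\cA=\cB=H$ a multiplicative subgroup with $|H|$ just above $\sqrt{2p}$; then $\cA\cdot\cB=H$ and $S(t)=|H|\sum_{h\in H}e^{2\pi i th/p}$. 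The only elementary inputs available---the pointwise bound $|S(t)|\le |H|\sqrt p$ for $t\ne 0$ together with Parseval on $|S|^2$---yield $\sum_{t\ne0}|S(t)|^8\ll |H|^9 p^4$, which \emph{exceeds} the main term $(|\cA||\cB|)^8=|H|^{16}$ when $|H|\asymp\sqrt p$. Closing this gap requires nontrivial cancellation in subgroup character sums, i.e.\ Bourgain--Glibichuk--Konyagin type input that is strictly deeper than the lemma you are proving; as written the scheme is circular.

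Glibichuk's own argument avoids Fourier analysis altogether. It runs a combinatorial dichotomy on the quotient set $(\cB-\cB)/(\cB-\cB)$ (comparing it with its translate by $1$) and, in the nontrivial branch, extracts elements $d_1,d_2\in\cB-\cB$ for which a suitable map $\cA\times\cB\to\rF_p$ is injective and takes values inside $4\cA\cdot\cB$; this gives $|4\cA\cdot\cB|\ge|\cA||\cB|>2p>p/2$ in one stroke. You allude to ``Glibichuk's symmetrization through $\cB-\cB$'' but feed it into the energy count rather than using it structurally; the direct combinatorial use is both shorter and sidesteps the obstacle above. If you want a self-contained proof here, I would follow Glibichuk's dichotomy rather than the $L^8$ scheme.
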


   Given a fixed prime number $p,$ we denote by $t_p$
   the {\it multiplicative order} of 2 modulo $p.$ That is
 $$
   t_p = \min \{\ell \;:\;2^{\ell}\equiv 1 \pmod p\}.
 $$
   As we mentioned in the introduction, the result of  Erd\"os--Murty~\cite{ErdosMurty} establish
   that for $\pi(N)(1+o(1))$ primes, $p\le N,$  we have $t_p > N^{1/2}e^{(\log N)^{\rho_0}},$
   with some sufficiently small $\rho_0 >0.$  We present an analogous result  for the
   {\it order of appearance,} defined by
 $$
   z(k)=\min\{\ell \;:\; F_{\ell}\equiv 0 \pmod k\},
 $$
   where $k$ is a fixed integer $k\ge 2$ and
   $F_n$ denotes the $n$th term of the sequence of Fibonacci numbers.

\begin{lemma}\label{lemma:OrderApp}
    For almost all primes
    $p \le N,$ we have
  $$
     z(p)\ge  N^{1/2}e^{(\log N)^{\rho}},
  $$
    with some appropriate $\rho >0.$
\end{lemma}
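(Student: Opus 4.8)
The plan is to mimic the argument of Erd\"os and Murty for the multiplicative order of $2$, with the base $2$ replaced by the golden ratio $\alpha=(1+\sqrt5)/2$ and the integers $2^m-1$ replaced by the Fibonacci numbers $F_m$. The three structural facts that make the transfer possible are all classical: for a prime $p\neq 5$ one has $p\mid F_m$ if and only if $z(p)\mid m$; moreover $z(p)\mid p-\left(\frac5p\right)$, so that $p\equiv\pm1\pmod{z(p)}$ and, in particular, every prime with $z(p)=m$ exceeds $m-1$; and, since $F_m=(\alpha^m-\beta^m)/\sqrt5$ with $|\beta|<1$, we have the size bound $F_m<\alpha^m<2^m$, which plays exactly the role that $2^m-1<2^m$ plays in~\cite{ErdosMurty}. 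In particular $\omega(F_m)<m\log\alpha/\log 2<m$, and the number of primes $p$ with $z(p)=m$ — being prime divisors of $F_m$ that are $\ge m-1$ — is at most $\log F_m/\log(m-1)=O(m/\log m)$.

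Write $y=N^{1/2}e^{(\log N)^{\rho}}$ and let $\cE_N$ be the set of primes $p\le N$ with $z(p)<y$; the goal is $|\cE_N|=o(\pi(N))$. First I would discard the $\pi(N^{1/2})=o(\pi(N))$ primes $p\le N^{1/2}$, and then the primes with $z(p)\le y_0:=N^{1/2}e^{-(\log N)^{\rho}}$: each such $p$ divides some $F_m$ with $m\le y_0$, so their number is at most $\sum_{m\le y_0}\omega(F_m)<\sum_{m\le y_0}m=O(y_0^{2})=O\!\left(Ne^{-2(\log N)^{\rho}}\right)=o(N/\log N)$, since $e^{2(\log N)^{\rho}}$ grows faster than $\log N$ for every $\rho>0$.

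It remains to treat the primes $p\in(N^{1/2},N]$ with $y_0<z(p)<y$, and this is the heart of the matter: here the naive prime-divisor count over the admissible $m$ only yields $O(y^{2}/\log N)=O\!\left(\pi(N)e^{2(\log N)^{\rho}}\right)$, which is useless, so the finer Erd\"os--Murty mechanism is genuinely needed. Following~\cite{ErdosMurty}, put $d=z(p)$, write $p-\left(\frac5p\right)=dk$ (so $k<2y$), and let $q$ be the largest prime factor of $p-\left(\frac5p\right)$. If $q>d$ then $q\nmid d$, hence $d\mid(p-\left(\frac5p\right))/q$ and $\alpha$ is a $q$-th power in the residue field of $p$ inside $\rF_p$ or $\rF_{p^{2}}$; equivalently $p$ splits in a prescribed way in the Kummer field $\mathbb{Q}\left(\sqrt5,\zeta_q,\alpha^{1/q}\right)$, of degree $\asymp q^{2}$. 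An upper-bound form of the Chebotarev density theorem, uniform in $q$ (effective Chebotarev, or an elementary large-sieve estimate), bounds the number of such $p\le N$ by $O\!\left(N/(q^{2}\log N)\right)$ up to an error term; summing the main term over primes $q>y_0$ gives $O\!\left(N/(y_0\log y_0\log N)\right)=o(\pi(N))$, and the ranges of $q$ for which the error would dominate are handled as in~\cite{ErdosMurty} (or by noting that then $d\le(p-\left(\frac5p\right))/q$ is small, reducing to the previous paragraph). If instead $q\le d$, then $p-\left(\frac5p\right)$ is $d$-smooth; combining this with $d<y$, $k<2y$ and $p\mid F_d$ and arguing again as in~\cite{ErdosMurty} gives a contribution $o(\pi(N))$. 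Collecting the three ranges yields $|\cE_N|=o(\pi(N))$ for any $\rho$ permitted by the Erd\"os--Murty estimates, possibly shrunk by an absolute constant.

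The step I expect to be the main obstacle is precisely the transfer in the last paragraph: one must check that replacing the rational base $2$ by the algebraic number $\alpha$ — so that $\mathbb{Q}$ is base-changed to the fixed quadratic field $\mathbb{Q}(\sqrt5)$, which only multiplies the relevant field degrees by $2$ — does not degrade the uniformity of the Chebotarev/large-sieve input for the fields $\mathbb{Q}\left(\sqrt5,\zeta_q,\alpha^{1/q}\right)$, and that the smooth-shifted-prime count survives the extra divisibility condition $p\mid F_d$. Everything else (the crude bound $\omega(F_m)<m$, the divisibility law $z(p)\mid p-\left(\frac5p\right)$, and Brun--Titchmarsh) transfers verbatim.
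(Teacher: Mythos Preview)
The paper does not actually supply a proof of this lemma: it is stated in Section~3 as the Fibonacci analogue of the Erd\H os--Murty theorem on the multiplicative order of~$2$, with a reference to~\cite{ErdosMurty}, and no argument is given. Your proposal --- to transport the Erd\H os--Murty proof verbatim, replacing $2$ by the golden ratio~$\alpha$ over the fixed base field~$\mathbb{Q}(\sqrt5)$ and $2^m-1$ by~$F_m$ --- is therefore exactly in line with what the paper tacitly assumes, and your identification of the three structural ingredients (the divisibility law $z(p)\mid p-\bigl(\tfrac5p\bigr)$, the size bound $F_m<\alpha^m$, and the prime-divisor count $\omega(F_m)\ll m/\log m$) is correct.

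Your sketch is sound in outline; the one place that deserves care, as you yourself flag, is the middle range $y_0<z(p)<y$. There the case split on the largest prime factor~$q$ of $p-\bigl(\tfrac5p\bigr)$ and the appeal to Chebotarev for the Kummer extensions $\mathbb{Q}\bigl(\sqrt5,\zeta_q,\alpha^{1/q}\bigr)$ go through with only cosmetic changes (the degree is multiplied by at most~$2$), and the smooth-shifted-prime estimate is unaffected by the extra condition $p\mid F_d$ since that condition is already encoded in $d=z(p)$. So the obstacle you anticipate is real but mild, and with those checks made your argument yields the lemma for any sufficiently small~$\rho>0$.
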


   We require the following lemma which follows from exponential sums estimates, see for
   example the proof of~\cite[Theorem1.1]{GaraevKueh} or~\cite{Sarkozy}.
\begin{lemma}\label{lemma:ternary}
    Let $X, Y$ and $Z$ be subsets of $\{0,1,\ldots, p-1\}.$
    Denote by $T$ the number of solutions of the congruence
  \begin{equation}\label{eq:trigsum}
    x y + z_1 + z_2 \equiv \lambda \pmod p,
  \end{equation}
    where
  $$
    x \in X, \quad y \in Y, \quad z_1,z_2 \in Z.
  $$
    Then, the asymptotic formula
  $$
    T = \frac{|X||Y||Z|^2}{p} + \theta\sqrt{p|X||Y|}|Z|, \quad |\theta|\le 1,
  $$
    holds uniformly over $\lambda.$ In particular Eq.~\eqref{eq:trigsum} has
    solution if $|X||Y||Z|^{2}>  p^{3}.$
\end{lemma}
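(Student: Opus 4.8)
The plan is to estimate $T$ by the standard Fourier-analytic approach over $\rF_p$. Writing the indicator of the congruence via additive characters,
\[
  T = \frac{1}{p}\sum_{r=0}^{p-1}\ \sum_{\substack{x\in X,\,y\in Y\\ z_1,z_2\in Z}}
      e^{2\pi i r(xy+z_1+z_2-\lambda)/p}
    = \frac{|X||Y||Z|^2}{p}
      + \frac{1}{p}\sum_{r=1}^{p-1} e^{-2\pi i r\lambda/p}\,
        S(r)\, \Bigl|\sum_{z\in Z} e^{2\pi i r z/p}\Bigr|^2,
\]
where $S(r)=\sum_{x\in X}\sum_{y\in Y} e^{2\pi i r x y/p}$ and the $r=0$ term has been split off as the main term. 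So it suffices to bound the error sum $E=\frac1p\sum_{r=1}^{p-1} S(r)\,\bigl|\sum_{z}e^{2\pi i rz/p}\bigr|^2$ in absolute value by $\sqrt{p|X||Y|}\,|Z|$.

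The key step is a bilinear (Cauchy--Schwarz) estimate for $S(r)$ combined with orthogonality. Applying Cauchy--Schwarz in the variable $x$ and then expanding the square in $y$,
\[
  |S(r)|^2 \le |X|\sum_{x=0}^{p-1}\Bigl|\sum_{y\in Y} e^{2\pi i r x y/p}\Bigr|^2
          = |X|\sum_{y,y'\in Y}\ \sum_{x=0}^{p-1} e^{2\pi i r x (y-y')/p}
          = |X|\,p\,\#\{(y,y')\in Y^2 : y\equiv y' \ (\mathrm{mod}\ p)\} = p|X||Y|,
\]
using that $Y\subseteq\{0,\dots,p-1\}$, so $y\equiv y'$ forces $y=y'$. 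Hence $\max_{r}|S(r)|\le \sqrt{p|X||Y|}$. Pulling this bound out of $E$ and using Parseval for the remaining factor,
\[
  |E| \le \frac{1}{p}\,\sqrt{p|X||Y|}\,\sum_{r=1}^{p-1}\Bigl|\sum_{z\in Z} e^{2\pi i r z/p}\Bigr|^2
       \le \frac{1}{p}\,\sqrt{p|X||Y|}\,\sum_{r=0}^{p-1}\Bigl|\sum_{z\in Z} e^{2\pi i r z/p}\Bigr|^2
       = \frac{1}{p}\,\sqrt{p|X||Y|}\cdot p|Z| = \sqrt{p|X||Y|}\,|Z|,
\]
which is exactly the claimed error term $\theta\sqrt{p|X||Y|}\,|Z|$ with $|\theta|\le 1$, uniformly in $\lambda$. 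The final assertion is immediate: if $|X||Y||Z|^2 > p^3$ then the main term $|X||Y||Z|^2/p$ strictly exceeds $\sqrt{p|X||Y|}\,|Z| = \sqrt{p|X||Y||Z|^2}$ (since squaring the desired inequality $|X||Y||Z|^2/p > \sqrt{p|X||Y||Z|^2}$ gives $|X||Y||Z|^2 > p^3$), so $T>0$ for every $\lambda$.

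I do not expect a genuine obstacle here; the only point requiring a little care is the clean separation of the $r=0$ term and the observation that $X,Y,Z$ being subsets of a full set of residues mod $p$ makes the diagonal counts collapse to $|Y|$ and $|Z|$ exactly, with no $O(\cdot)$ slack. If one instead wanted this for multisets or for integer sets not reduced mod $p$, one would pick up harmless lower-order terms, but as stated the identity is exact and the bound is as sharp as written.
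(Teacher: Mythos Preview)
Your argument is correct and is precisely the standard exponential-sum/Cauchy--Schwarz computation that the paper has in mind when it defers to \cite{GaraevKueh} and \cite{Sarkozy} rather than writing out a proof. One cosmetic slip: in the first display the factor coming from $z_1,z_2$ is $\bigl(\sum_{z\in Z} e^{2\pi i rz/p}\bigr)^{2}$, not its modulus squared; this is irrelevant once you take absolute values in the bound for $|E|$, so the estimate and the conclusion stand as written.
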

\bigskip

     We shall use some results concernig the values of Fibonacci sequence.
   \begin{align}
     F_{u+v} &=\frac{1}{2}(F_uL_{v} + L_u F_{v}), \label{eq:FibIdentity1}\\
     F_{u-v}&=\frac{(-1)^{v}}{2}(F_uL_{v} - L_u F_{v}),\label{eq:FibIdentity2}
   \end{align}
     where $\{L_m\}$ is the Lucas sequence given by
   $$
      L_{m+2}=L_{m+1}+L_m, \quad L_1=1, \,L_2=3.
   $$

   The following lemma is due to Bourgain and Garaev~\cite{BourgainGaraev}

\begin{lemma}\label{lemma:EnergyPrimRoots}
   Let $g$ be a fixed primitive root modulo $p.$ Let $1 < M < p^{1/2}$ and denote by $T$ be the number of solutions of the congruence
 $$
   g^x +g^y \equiv g^z + g^w \pmod p; \qquad 1 \le x,y,z,w \le M.
 $$
   Then
 $$
  T < M^{3 - 1/24 + o(1)}.
 $$
\end{lemma}

\section{Proof of Theorems}

\subsection{Proof of Theorem~\ref{thm:average}}
   If $x=y$ then Eq.~\eqref{eq:sim} has $\pi(N)|\cX|$ solutions. Therefore
 \begin{equation}\label{eq:J}
   \cJ(N)=\pi(N)|\cX| + \cJ',
 \end{equation}
   where $\cJ'$ denotes the number of solutions of~\eqref{eq:sim} subject to $x\neq y.$ Given
   $x,y$ in $\cX$ with $x\neq y,$ the equation
 $$
   pk = x-y, \qquad p \le N,
 $$
   has at most $\omega(|x-y|)$ solutions, where $\omega(n)$ denotes the number of prime divisors of $n.$
   If \mbox{$4\le |x-y|\le 10^M$ }, using the well-known estimate $\omega(n)\ll (\log n)/(\log \log n),$ 
   we obtain that~\eqref{eq:sim} has at most $\cO(|\cX|^2M/\log M)$ solutions. Otherwise, if $0< |x-y|<4,$ then~\eqref{eq:sim}
   has no more than $\cO(|\cX|)$ solutions. Thus
 $$
   \cJ' \ll |\cX|^2\frac{M}{\log M}.
 $$
   Inserting this upper bound for $\cJ'$ in~\eqref{eq:J}, Theorem~\ref{thm:average} follows. $\qed$

\subsection{Proof of Theorem~\ref{thm:main}}

   Before the proof, we shall introduce the following lemma

\begin{lemma}\label{lemma:asympJ_p}
        Let $J_p$ be the number of solutions of the congruence
   \begin{equation}\label{eq:J_p}
      x \equiv y \pmod p; \quad x,y \in \cX.
   \end{equation}
     For $\pi(n)=(1+\cO(1/\Delta))$ primes $p\le N$ we have
  \begin{equation}\label{asymp:J_p}
     J_p = |\cX|+\cO \left(\frac{|\cX|^2M}{\pi(N)\log M}\Delta\right).
  \end{equation}
\end{lemma}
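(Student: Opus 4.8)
The plan is to deduce this from Theorem~\ref{thm:average} by a simple averaging / Markov argument. By definition $\cJ(N) = \sum_{p \le N} J_p$, where the sum is over primes $p \le N$. Theorem~\ref{thm:average} gives $\cJ(N) = \pi(N)|\cX| + \cO(|\cX|^2 M / \log M)$, so the ``average value'' of $J_p$ over primes $p \le N$ is $|\cX| + \cO(|\cX|^2 M / (\pi(N)\log M))$. The diagonal contribution $x = y$ already forces $J_p \ge |\cX|$ for every prime $p$, so writing $J_p = |\cX| + J_p'$ with $J_p' \ge 0$ the number of off-diagonal solutions, we have $\sum_{p\le N} J_p' = \cJ'(N) \ll |\cX|^2 M / \log M$. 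Thus the nonnegative quantities $J_p'$ have total mass $\ll |\cX|^2 M / \log M$ spread over $\pi(N)$ primes.

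First I would apply Markov's inequality: for any threshold $H > 0$, the number of primes $p \le N$ with $J_p' > H$ is at most $\cJ'(N) / H \ll |\cX|^2 M / (H \log M)$. Choosing $H = \Delta \cdot |\cX|^2 M / (\pi(N) \log M)$ (i.e.\ $\Delta$ times the average) makes this count $\ll \pi(N) / \Delta$, which is exactly the exceptional set allowed in the statement. Hence for $\pi(N)(1 + \cO(1/\Delta))$ primes $p \le N$ we have $J_p' \le H = \cO\!\left(|\cX|^2 M \Delta / (\pi(N) \log M)\right)$, and adding back the diagonal term $|\cX|$ gives precisely~\eqref{asymp:J_p}.

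There is essentially no main obstacle here — the lemma is a routine pigeonhole consequence of Theorem~\ref{thm:average}. The only point requiring a small amount of care is bookkeeping the exceptional set correctly: one must check that ``$\pi(N)(1 + \cO(1/\Delta))$ primes'' is the right way to phrase ``all but $\cO(\pi(N)/\Delta)$ primes,'' and that the error term in~\eqref{asymp:J_p} is stated as a one-sided $\cO$-bound on the surplus $J_p - |\cX|$ (it is automatically $\ge 0$), so that the asymptotic formula is valid for the claimed density of primes. I would also note in passing that the hypothesis $\Delta \to \infty$ is what guarantees the exceptional set is genuinely of density zero, though the inequality itself holds for any fixed $\Delta > 0$ as well.
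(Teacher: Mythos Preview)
Your proposal is correct and is essentially identical to the paper's own proof: the paper also writes $\cJ(N)=\sum_{p\le N}J_p$, uses $J_p\ge |\cX|$ from the diagonal, defines the exceptional set $\cP=\{p\le N: J_p-|\cX|>\Delta\,|\cX|^2M/(\pi(N)\log M)\}$, and bounds $|\cP|\ll \pi(N)/\Delta$ by the same Markov/pigeonhole step combined with Theorem~\ref{thm:average}. The only difference is that you name the step ``Markov's inequality'' while the paper carries it out by hand.
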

\begin{proof}
     Note that $J_p \ge |\cX|,$ because the case $x=y$ satisfies~\eqref{eq:J_p}. It is clear that
  $$
    \cJ(N)= \sum_{p \le N} J_p.
  $$
    Denote by $\cP$ the set of prime numbers $p\le N$ such that
  $$
    J_p - |\cX| > \frac{|\cX|^2 M}{\pi(N)\log M}\Delta.
  $$
   If $p$ runs through the set $\cP,$ recalling that $J_p - |\cX|\ge 0,$ we get
  $$
    |\cP| \frac{|\cX|^2 M}{\pi(N)\log M}\Delta \le \sum_{p \in \cP}(J_p -|\cX|)\le
    \sum_{p \le N}(J_p -|\cX|) = \cJ(N) - \pi(N)|\cX|.
  $$
    Thus,  applying Theorem~\ref{thm:average}, we derive that
   $$
     |\cP|\ll \frac{\pi(N)}{\Delta}.
   $$
     Therefore, if $\cQ$ denotes the number of primes $p\le N$ such that
   $$
     J_p - |\cX| \le \frac{|\cX|^2 M}{\pi(N)\log M}\Delta,
   $$
     then
   $$
     |\cQ|= \pi(N)-|\cP|= \pi(N)(1+\cO(\Delta^{-1})).
   $$
\end{proof}
    Theorem~\ref{thm:main} follows from the relation
   $$
     \#\{x \pmod p \;:\; x \in \cX\} \ge \frac{|\cX|^2}{J_p}. \qed
   $$

\subsection{Proof of Theorem~\ref{thm:Waring_0}}
    Lemma~\ref{lemma:OrderApp} allow us to establish the order of the value set of  the  Fibonacci sequence
    for most primes
   \begin{equation*}
      \#\{F_n \pmod p \;:\; n \le \delta N^{1/2 }\} \asymp \delta N^{1/2 },
   \end{equation*}
     where $\delta=\delta(N)=e^{(\log N)^{\rho}}$ and $\rho>0 $  is the refered constant in Lemma~\ref{lemma:OrderApp}.
     In order to establish the last relation, it is sufficent to prove that for
   \begin{equation*}
       \cF=\{F_{2n}  \;:\; \delta N^{1/2} /10< n \le \delta N^{1/2} /5\},
   \end{equation*}
      we have
   \begin{equation}\label{eq:ValueSetFib}
     |\cF \pmod p| = |\cF| = \frac{\delta N^{1/2}}{10} + \cO(1).
   \end{equation}
     Let $n,n'$ be positive integers such that
   \begin{equation}\label{eq:I}
     F_{2n}\equiv F_{2n'}\pmod p; \quad  \delta N^{1/2} /10<n,n' \le \delta N^{1/2} /5.
   \end{equation}
      Without loss of generality we can assume that $n\ge n'.$ Substituting $u= n +n'$ and $v=n-n'$
      in~\eqref{eq:FibIdentity1} and~\eqref{eq:FibIdentity2},  we can obtain
   {\small
   $$
      F_{2n} - F_{2 n'} =
      \frac{1}{2}\left((1-(-1)^{n-n'})F_{n+n'}L_{n-n'}+ (1+(-1)^{n-n'})L_{n+n'}F_{n-n'}\right).
   $$ }
      Suppose that $n- n' \equiv 0 \pmod 2,$ then from Eq.~\eqref{eq:I} follows
   $$
     p | L_{n+n'}F_{n-n'}.
   $$
      If $n \neq n',$ then  $0 < n-n'< N^{1/2}\delta \le z(p),$ which implies
      $(p,F_{n-n'})=1.$
       Thus
   $$
     p| L_{n+n'}, \quad\textrm{in particular }\,\; p|F_{n +n'}L_{n+n'},
   $$
     where $F_{n +n'}L_{n+n'}=F_{2(n+n')}.$ Hence $p|F_{2(n +n')},$
     with $2(n+ n') < z(p).$ This contradicts the choice of $z(p).$ Therefore in the case
     $n - n' \equiv 0 \pmod 2$
     Eq.~\eqref{eq:I} has only trivial solutions $n=n'.$  Similarly,  it is possible to verify that 
     \eqref{eq:I} has not solutions if 
     $n - n' \equiv 1 \pmod 2.$

\bigskip

     Now, consider the subset of Lucas sequence
   $$
      \cL = \{L_{2m} \;:\; 1\le m\le N^{1/2}/\sqrt{\delta}\}.
   $$
     Taking in Theorem~\ref{thm:main}; $M=N^{1/2}/\sqrt{\delta}$ and $\Delta={\delta}^{1/4}$ we obtain
    \begin{equation}\label{eq:ValueSetLucas}
        | \cL \pmod p| =\frac{N^{1/2}}{\sqrt{\delta}}(1+\cO({\delta}^{-1/4})).
    \end{equation}
     Observe that equalities~\eqref{eq:ValueSetFib} and~\eqref{eq:ValueSetLucas} are valid respectively
     for most primes. Thus,  for $\pi(N)(1 + o(1))$ primes $p \le N$ we have
   $$
     |\cF \pmod p||\cL \pmod p|\gg \sqrt{ \delta} N \ge 2p.
   $$
     Applying Lemma~\ref{lemma:Glibichuk},
     we obtain that for almost all primes $p$  every integer $\lambda$ can be written as
   $$
     F_{2n_1}L_{2m_1} +\ldots+F_{2n_8}L_{2m_8}\equiv \lambda \pmod p,
   $$
     where
   $$
     N^{1/2}\delta /10< n_i  \le N^{1/2}\delta /5, \quad 1 \le m_i \le N^{1/2}/\sqrt{\delta}, \quad 1 \le i \le 8.
   $$
     Using the identity
   $$
     F_uL_{v}= F_{u+v} + (-1)^{v}F_{u-v},
   $$
     for every $1\le  i\le  8$ we get
   $$
     F_{2n_i}L_{2m_i}=  F_{2(n_i+m_i)} + F_{2(n_i-m_i)}.
   $$
     Thus, Theorem~\ref{thm:Waring_0} follows. $\qed$
\subsection{Proof of Theorem~\ref{thm:Waring_1}}
     Let $k$ be the minimal integer such that ${1}/{(k+2)} < {\varepsilon}/{8}.$  Define the sets
   \begin{align*}
     X &=\{F_{2n_1-1}+ \ldots + F_{2n_k-1}\;:\;1\le n_1, \ldots, n_k \le N^{\frac{1}{k+2}}\}, \\
     Y &= \{L_m \;:\;\frac{1}{2}N^{\frac{7}{k+2}} < m \le N^{\frac{7}{k+2}}\}, \\
     Z &=\{F_{2\ell_1}+ \ldots + F_{2\ell_k}\;:\; 1 \le \ell_1, \ldots, \ell_k \le N^{\frac{1}{k+2}} \}.
   \end{align*}
     Observe that $|Y| \gg N^{\frac{7}{k+2}}$ and exist a positive constant  $c=c(k) < 1$ such that
   $$
     |X|, |Z| \ge cN^{\frac{k}{k+2}}.
   $$
     In order to estimate the value set of $X \pmod p$ note that if $x\in X,$ then $x \le 10^{\log k N^{1/(k+2)}}.$ Thus,
     applying Corollary~\ref{coro:asympSparseSet}  with $M=\log k N^{1/(k+2)},$ $\cX=Z $ and $\Delta = (\log N)^A,$
     (for any integer $A>0$), we have that for most of primes $p\le N$
   $$
     |X \pmod p | =|X|(1 + o(1)).
   $$
     Analogously, we can obtain
   $$
     |Y \pmod p|=|Y|(1 + o(1)), \qquad |Z \pmod p|=|Z|(1 + o(1)),
   $$
     for almost all primes respectively. Therefore, there is a constant $c_1=c_1(k),$ $0 < c_1 <1,$ such that
     for $\pi(N)(1+o(1))$  primes $p\le N$ we have
   $$
     |X \pmod p||Y \pmod p||Z \pmod p|^2 \ge c_1 N^{3 + \frac{1}{k+2}}> p^{3 + \frac{1}{k+2}}.
   $$
     Applying Lemma~\ref{lemma:ternary} it follows that for almost all primes every integer $\lambda$ can be
     represented as
   \begin{equation}\label{eq:TernaryFibonacci}
      \sum_{i=1}^{k}L_mF_{2n_i-1}+ \sum_{j=1}^{k}(F_{2\ell_j}+F_{2\ell'_j}) \equiv \lambda \pmod p,
   \end{equation}
     where
   $$
     \frac{1}{2}N^{\frac{7}{k+2}} < m \le N^{\frac{7}{k+2}}, \quad
     1\le n_i\le N^{\frac{1}{k+2}}, \quad
     1 \le \ell_j,\ell'_j \le N^{\frac{1}{k+2}}, \quad (1 \le i,j \le k).
   $$
     We recall the identity
   $$
      L_u F_v = F_{u+v}+(-1)^{v+1}F_{u-v}.
   $$
     Thus, for every $1 \le i \le k$ in~\eqref{eq:TernaryFibonacci} we get
   $$
     L_mF_{2n_i-1}=F_{m + 2n_i - 1}+ F_{m - 2n_i +1}.
   $$
     taking $s=4k$ (that  is, $s = 4([ 8/\varepsilon]-1)$), we conclude that for almost all primes
     every residue class $\lambda$ has a representation
     in the form
   $$
     F_{n_1}+\ldots +F_{n_s}\equiv \lambda \pmod p,
   $$
     for some integers
   $$
     1\le n_1, \ldots, n_s \le N^{\varepsilon}. \qed
   $$
\subsection{Proof of Theorem~\ref{thm:L1PrimitiveRoot}}
    Note that the congruence
  $$
    g^x \equiv g^y \pmod p; \qquad1 \le x,y \le N,
  $$
    has exactly  $N$ solutions. Therefore,
  $$
    N = \frac{1}{p}\sum_{x=0}^{p-1}\left|\sum_{n\le N}e^{2 \pi i \frac{x}{p}g^n}\right|^2 =
    \frac{1}{p}\sum_{x=0}^{p-1}\left|\sum_{n\le N}e^{2 \pi i \frac{x}{p}g^n}\right|^{2/3}
    \left|\sum_{n\le N}e^{2 \pi i \frac{x}{p}g^n}\right|^{4/3},
  $$
    using H\"older's inequality we obtain
  \begin{eqnarray*}
     N &\le  &\frac{1}{p}\left(\sum_{x=0}^{p-1}\left|\sum_{n\le N}e^{2 \pi i \frac{x}{p}g^n}\right|\right)^{2/3}
     \left(\sum_{x=0}^{p-1}\left|\sum_{n\le N}e^{2 \pi i \frac{x}{p}g^n}\right|^4\right)^{1/3}\\
       & \le &{T^{1/3}}\left(\frac{1}{p}\sum_{x=0}^{p-1}\left|\sum_{n\le N}e^{2 \pi i \frac{x}{p}g^n}\right|\right)^{2/3},
  \end{eqnarray*}
    where, $T$ denotes the number of solutions of the congruence
  $$
     g^x + g^y \equiv g^z + g^w \pmod p; \qquad  1\le x,y,z,w \le N.
  $$
    Thus, from Lemma~\ref{lemma:EnergyPrimRoots} we know that $T < N^{3 - 1/24 + o(1)}.$ Therefore
  $$
    \frac{1}{p}\sum_{x=0}^{p-1}\left|\sum_{n\le N} e^{2 \pi i }\right| > N^{1/48 - o(1)}. \qed
  $$

\subsection{Proof of Theorem~\ref{thm:L1Fibonacci}}

    Observe that the congruence
  $$
    F_n\equiv F_{n'}\pmod p; \qquad 1 \le n,n' \le N^{\gamma},
  $$
    has at least $N^{\gamma}$ solutions. Therefore
  $$
    N^{\gamma}\le \frac{1}{p}\sum_{x=0}^{p-1}
    \left|\sum_{n\le N^{\gamma}}e^{2 \pi i \frac{x}{p}F_n}\right|^2.
  $$
    From H\"older's inequality, as in the proof of Theorem~\ref{thm:L1PrimitiveRoot}, it is possible to
    obtain
  \begin{equation}\label{ineq:L1Fibonacci}
     N^{\gamma} \le  T_p^{1/3}\left(\frac{1}{p}\sum_{x=0}^{p-1}
     \left|\sum_{n\le N^{\gamma}}e^{2 \pi i \frac{x}{p}F_n}\right|\right)^{2/3},
  \end{equation}
    where $T_p$ denotes the number of solutions of the congruence
  $$
    F_{n_1}+F_{n_2}\equiv F_{m_1}+F_{m_2}\pmod p; \quad 1 \le n_1,n_2,m_1,m_2 \le N^{\gamma}.
  $$
    Let
  $$
    \cX = \{F_{n_1}+F_{n_2} \;:\; 1 \le n_1,n_2 \le N^{\gamma}\}.
  $$
    Then $|\cX|\asymp N^{2\gamma}.$
    Applying  Lemma~\ref{lemma:asympJ_p} with $M=N^{\gamma}$ and $\Delta=N^{(1-3\gamma)/2}$
    we get, for $\pi(N)(1+o(1))$ primes $p\le N,$ the estimation
  $$
    T_p \le N^{2\gamma}\left(1 + N^{-(1-3\gamma)/2}\right).
  $$
    Combining this estimation with relation~\eqref{ineq:L1Fibonacci} we conclude that there is a
    positive constant $c_1(\gamma)$ such that
  $$
    \frac{1}{p}\sum_{x=0}^{p-1}
     \left|\sum_{n\le N^{\gamma}}e^{2 \pi i \frac{x}{p}F_n}\right| \ge c_1(\gamma)N^{\gamma/2}.
  $$
    Finally, to obtain an upper bound of the same order, using the Cauchy-Schwartz inequality we have
  \begin{equation}\label{ineq:UpperBoundFibonacci}
    \left( \frac{1}{p}\sum_{x=0}^{p-1}
     \left|\sum_{n\le N^{\gamma}}e^{2 \pi i \frac{x}{p}F_n}\right|\right)^2 \le
     \frac{1}{p}\sum_{x=0}^{p-1}
     \left|\sum_{n\le N^{\gamma}}e^{2 \pi i \frac{x}{p}F_n}\right|^2,
  \end{equation}
    where the right term is, indeed, the number of solutions of the congruence
  $$
     F_n\equiv F_m \pmod p; \qquad 1 \le n,m \le N^{\gamma}.
  $$
    Applying again Lemma~\ref{lemma:asympJ_p} with $M=N^{\gamma}$ and $\Delta=N^{(1-2\gamma)/2},$
    we obtain, for $\pi(N)(1+o(1))$ primes $p\le N,$ the estimation
  $$
    \frac{1}{p}\sum_{x=0}^{p-1}
     \left|\sum_{n\le N^{\gamma}}e^{2 \pi i \frac{x}{p}F_n}\right|^2 \le N^{\gamma}\left(1 + N^{-(1-2\gamma)/2}\right)
     \le c_2(\gamma) N^{\gamma},
  $$
    for some positive constant $c_2(\gamma).$ Putting together with~\eqref{ineq:UpperBoundFibonacci} and
    taking square root we conclude the proof. $\qed$

\end{document}